\numberwithin{equation}{section}
\newtheorem{thm}{Theorem}[section]
\newcommand{\Var}{\operatorname{Var}}
\newtheorem{lem}{Lemma}[section]
\newtheorem{rem}{Remark}[section]
\newcommand{\Cov}{\operatorname{Cov}}
\newcommand{\E}{\operatorname{E}}
\newcommand{\MA}{\operatorname{MA}}
\newcommand{\Tr}{\operatorname{tr}}
\begin{document}

\title{Lower bounds for volatility estimation in microstructure noise models} 

\author{Axel Munk$^{1,2}$ and Johannes Schmidt-Hieber$^1$ \\
\vspace{0.1cm} \\
{\em Institut f\"ur Mathematische Stochastik, Universit\"at G\"ottingen,} \\ {\em Goldschmidtstr. 7, 37077 G\"ottingen} \\ {\small {\em Email:} \texttt{munk@math.uni-goettingen.de}, \texttt{schmidth@math.uni-goettingen.de}}}

\footnotetext[1]{The research of Axel Munk and Johannes Schmidt-Hieber was supported by DFG Grant FOR 916 and GK 1023.}
\footnotetext[2]{Author for correspondence, email:  \texttt{munk@math.uni-goettingen.de}}

\date{}
\maketitle

%
%
%
%



\begin{abstract}
In this paper we derive lower bounds in minimax sense for estimation of the instantaneous volatility if the diffusion type part cannot be observed directly but under some additional Gaussian noise. Three different models are considered. Our technique is based on a general inequality for Kullback-Leibler divergence of multivariate normal random variables and spectral analysis of the processes. The derived lower bounds are indeed optimal. Upper bounds can be found in \cite{mun}. Our major finding is that the Gaussian microstructure noise introduces an additional degree of ill-posedness for each model, respectively.
\end{abstract}

\medskip

\noindent\textbf{AMS 2000 Subject Classification:}
Primary 62M10; secondary 62G08, 62G20.

\noindent\textbf{Keywords:\/} Brownian motion; 
Variance estimation; Kullback-Leibler divergence; Minimax rate; Microstructure noise.

%
%
%
%
%
%

\section{Introduction and Discussion}
Let $X_t=\int_0^t \sigma\left(s\right) dW_s,$ where $\left(W_t\right)_{t\in \left[0,1\right]}$ denotes here and in the following a standard Brownian motion. Consider the model
\begin{eqnarray}
	Y_{i,n}&= X_{i/n}
	+\tau \epsilon_{i,n}, 
	\quad i=1,\ldots,n,
	\label{eq.mod2}
\end{eqnarray}
where $\epsilon_{i,n} \sim \mathcal{N}\left(0,1\right),$ i.i.d. Throughout the paper $\left(W_t\right)_{t\in \left[0,1\right]}$ and $\left(\epsilon_{1,n},\ldots,\epsilon_{n,n}\right)$ are assumed to be independent. Further $\sigma$ is an unknown, positive and deterministic function and $\tau>0$ is a known constant.

In the financial econometrics literature variations of model (\ref{eq.mod2}) are often denoted as high-frequency models, since $\left(W_t\right)_{t\in \left[0,1\right]}$ is sampled on time points $t=i/n$ corresponding to short time intervals $1/n$ which can be of the magnitude of seconds, nowadays. There is a vast amount of literature on volatility estimation in high-frequency models with additional microstructure noise term (see Barndorff-Nielsen et {\it al.} \cite{barn}, Jacod et {\it al.} \cite{jac}, Zhang \cite{zha2} and Zhang et {\it al.} \cite{zha1} among many others). These kinds of models have attained a lot of attention recently, since the usual quadratic variation techniques for estimation of $\int_0^1\sigma^2(s)ds$ lead to inconsistent estimators (cf. Zhang \cite{zha2}). Brown et {\it al.} \cite{bro03} studied low- and high-frequency volatility models by means of asymptotic equivalence. Recently, Rei\ss \ \cite{rei2} has shown that model (\ref{eq.mod2}) is asymptotically equivalent to a Gaussian shift experiment.

\noindent
Closely related to model (\ref{eq.mod2}) is
\begin{eqnarray}
	\tilde Y_{i,n}&= \sigma\left(\frac in\right)W_{i/n}
	+\tau \epsilon_{i,n}, 
	\quad i=1,\ldots,n.
	\label{eq.mod}
\end{eqnarray}
This model can be regarded as a nonparametric extension of the model with constant $\sigma, \tau$ as discussed by Gloter and Jacod \cite{glo1}, \cite{glo2} and for variogram estimation by Stein \cite{ste}. As a further natural modification of (\ref{eq.mod2}), we consider
\begin{eqnarray}
	\bar Y_{i,n}= \int_0^{i/n} X_s ds
	+\tau \epsilon_{i,n}, 
	\quad i=1,\ldots,n.
	\label{eq.mod3}
\end{eqnarray}
For constant $\sigma$, the process $$\left(\int_0^{t} X_s ds\right)_{t\geq 0} \stackrel{\mathcal{D}}{=} \left(\int_0^{t} \left(t-s\right)\sigma\left(s\right) dW_s\right)_{t\geq 0}$$ is called integrated Brownian. In this case, model (\ref{eq.mod3}) has been used as a prior model for nonparametric regression (see e.g. Cox \cite{cox}).

\noindent
All models have a common structure: They might be interpreted as observations coming from a particular Volterra type stochastic integral, i.e. $\int_0^t K\left(s,t\right)\sigma\left(s,t\right) dW_s$ under additional measurement noise. In this paper we derive lower bounds for the models (\ref{eq.mod2})-(\ref{eq.mod3}). We stress that the treatment for general $K$ is an interesting but rather difficult task.


\noindent
More precisely, we derive minimax lower bounds for estimation of the instantaneous volatility, i.e. $\sigma^2$ as a function of time, with respect to $L_2$-loss. One of the key steps is to consider the estimation problem after taking finite differences which is typical for variance estimation (see Brown and Levine \cite{bro07} or Munk et {\it al.} \cite{mun2}). Usually in nonparametric regression, lower bounds are obtained under an independence assumption on the observations. In order to deal with dependend data, we introduce a new bound of the Kullback-Leibler divergence which might be of interest by its own. The lower bounds then follow from a standard multiple testing argument together with this bound and the control of the eigenvalues of the covariance operator.

\noindent
In nonparametric variance estimation, $n^{-\alpha/\left(2\alpha+1\right)}$ is well-known to be the minimax rate of convergence given H\"older-smoothness $\alpha$ (for a definition see (\ref{eq.hoelddef})) of the variance and a sufficiently smooth regression function (see Brown and Levine \cite{bro07} or Munk and Ruymgaart \cite{mun3}). We show that for the microstructure noise models (\ref{eq.mod2}) and (\ref{eq.mod}) the lower bounds are $n^{- \alpha/\left(4\alpha+2\right)}$ and for model (\ref{eq.mod3}), $n^{- \alpha/\left(8\alpha+4\right)}.$ If $\sigma$ exceeds some minimal required smoothness these rates are shown to be also upper bounds for models (\ref{eq.mod2}) and (\ref{eq.mod}) (Munk and Schmidt-Hieber \cite{mun}). 

\noindent
For constant $\sigma,$ i.e. $Y_{i,n}=\sigma W_{i/n}+\tau\epsilon_{i,n},$ $8\tau \sigma^3n^{-1/4}$ is the optimal asymptotic variance for estimation of $\sigma^2.$ First this has been shown in a more general setting by Gloter and Jacod \cite{glo1} but can be also derived  as in Cai et {\it al.} \cite{cai} with respect to minimax risk via the information inequality method (see Brown and Farrell \cite{bro90} and Brown and Low \cite{bro91}). In order to explain the optimal rate $n^{-1/4}$ it is tempting to think that this is related to the pathwise smoothness of the Brownian motion, since optimal estimation of a H\"older continuous function with index $1/2$ results in a $n^{-1/4}$ rate of convergence. However, this reasoning is in general not true since the smoothness of integrated Brownian motion is arbitrarily close to $3/2.$ By the same argument we should obtain in model (\ref{eq.mod3}) an $n^{-3/8}$ rate if $\sigma$ constant. This contradicts the obtained lower bound $n^{-1/8}.$ 

\noindent
Indeed, we find it more properly to look at these models from the viewpoint of statistical inverse problems. The eigenvalues of the Brownian motion and integrated Brownian motion covariance operators behave like $1/i^2$ and $1/i^4,$ respectively (see Freedman \cite{fre}). So the maximal frequency $i$ for estimation of $\sigma$ is reached when $1/i^2\sim 1/n$ (for Brownian motion) and $1/i^4\sim 1/n$ (for integrated Brownian motion), i.e. in the first case we may use $O\left(n^{1/2}\right)$ frequencies and in the second case $O\left(n^{1/4}\right)$ resulting in the reduction of the rate of convergence by a factor of $1/2$ and $1/4,$ respectively. Motivated by this heuristics, we conjecture that the optimal rate of convergence for the kernel $K\left(s,t\right)=\left(t-s\right)^q, q\in [0,\infty)$ and $\sigma\left(s,t\right)=\sigma\left(s\right)$ is $n^{-\alpha/\left[\left(2q+2\right)\left(2\alpha+1\right)\right]}.$ This implies that the rate of convergence decreases as the order of the zero of $K$ increases, or equivalently, as smoother the path of $\int_0^t K\left(s,t\right)\sigma\left(s\right)dW_s$ becomes.
For a further result in this direction ($q\in(0,1/2),$ $\sigma$ constant) see Gloter and Hoffmann \cite{glo4}. Note that for $q\in [1/2,1)\cup (1,\infty)$ the spectral decomposition of the integrated Brownian motion is not known and hence our strategy of proof cannot be applied. More general techniques are required.

\noindent
Note finally that the lower bounds still hold if we consider generalizations of the models. For instance consider model (\ref{eq.mod2}) and allow $\sigma$ to be random itself. Assume further that $\epsilon=\left(\epsilon_{1,n},\ldots,\epsilon_{n,n}\right)$ is general microstructure noise, i.e. white noise with bounded moments (see Huang et {\it al.} \cite{hua} or Zhou \cite{zho}). In this model the lower bound for estimation of the instantaneous volatility is still $n^{-\alpha/\left(4\alpha+2\right)},$ of course. To show that this is indeed the upper bound is current research by Marc Hoffmann and the authors, where the estimator is constructed by wavelet techniques and methods as in Brown et {\it al.} \cite{bro08} are employed.

\section{Results}
The next Lemma might be of interest by its own and can be applied to various problems in variance estimation where we do not have independent observations. The Lemma can be viewed as a generalization of Lemma 2.1 in Golubev et {\it al.} \cite{gol} for matrices with non-uniformly bounded eigenvalues. See also inequality (3.8) of Rei\ss \ \cite{rei}. For our purpose it is required to allow eigenvalue sequences tending to $0$ and $\infty$. The proof of our minimax results follows standard arguments (see Tsybakov \cite{tsyb}). Note, however, that the underlying dependency structure of the process causes severe technical difficulties and requires very sharp bounds for the Kullback-Leibler distance in these models as given in the next lemma. Recall that the Kullback-Leibler divergence between two probability measures $P,Q$ is defined as
\begin{align*}
	d_K\left(P,Q\right):=\int \log\left(\frac{dP}{dQ}\right) dP,
\end{align*}
whenever $P\ll Q$ and $+\infty$ otherwise.

\begin{lem}
\label{lem.kllem}
Let $X\sim\mathcal{N}\left(\mu, \Sigma_0\right)$ and $Y\sim\mathcal{N}\left(\mu, \Sigma_1\right)$ be $n$-variate normal r.v's with expectation $\mu$ and covariance $\Sigma_0$ and $\Sigma_1$, respectively and denote by $P_{X}$ and $P_Y$ the corresponding probability measures. Assume $0< C\Sigma_0\leq \Sigma_1$ for some constant $0<C\leq 1$. Then it holds for the Kullback-Leibler divergence
\begin{align}
	d_K(P_Y,P_X)
		\leq
		\frac 1{4C^2} \left\|\Sigma_0^{-1/2}\left(\Sigma_1-\Sigma_0\right)\Sigma_0^{-1/2}\right\|_F^2
		\leq
		\frac 1{4C^2}
		\left\|\Sigma_0^{-1}\Sigma_1-I_n\right\|_F^2,
	\label{eq.kullineq}
\end{align}
where $I_n$ denotes the $n \times n$ dimensional identity matrix and $\left\|\cdot \right\|_F$ is the Frobenius norm, i.e. for a matrix $A$, $\left\|A\right\|_F:=\Tr^{1/2}\left(AA^t\right)$.
\end{lem}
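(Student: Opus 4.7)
The plan is to reduce the Kullback--Leibler divergence to a scalar inequality on the eigenvalues of the single matrix $A := \Sigma_0^{-1/2}\Sigma_1\Sigma_0^{-1/2}$, and then to use a second--order Taylor expansion that behaves nicely under the lower bound $\lambda \ge C$ coming from the hypothesis $C\Sigma_0 \le \Sigma_1$.

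First I would recall the standard closed form for the KL divergence between two $n$--variate normals with common mean,
\begin{equation*}
d_K(P_Y,P_X) \;=\; \tfrac{1}{2}\bigl(\Tr(\Sigma_0^{-1}\Sigma_1) - n - \log\det(\Sigma_0^{-1}\Sigma_1)\bigr).
\end{equation*}
Since $A=\Sigma_0^{-1/2}\Sigma_1\Sigma_0^{-1/2}$ is symmetric positive definite and similar to $\Sigma_0^{-1}\Sigma_1$, both $\Tr$ and $\det$ depend only on its eigenvalues $\lambda_1,\ldots,\lambda_n>0$, so
\begin{equation*}
d_K(P_Y,P_X) \;=\; \tfrac{1}{2}\sum_{i=1}^n \bigl(\lambda_i - 1 - \log\lambda_i\bigr).
\end{equation*}
The assumption $0<C\Sigma_0\le\Sigma_1$ is equivalent to $A\ge C\,I_n$, i.e.\ $\lambda_i\ge C$ for every $i$.

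The central analytic step is to bound the summand. For $f(x):=x-1-\log x$ one has $f(1)=f'(1)=0$ and $f''(x)=1/x^2$, so Taylor's theorem with Lagrange remainder gives $f(\lambda)=(\lambda-1)^2/(2\xi^2)$ for some $\xi$ between $1$ and $\lambda$. Because $C\le 1$, in both cases $\lambda\ge 1$ and $C\le\lambda<1$ one has $\xi\ge C$, so $f(\lambda)\le (\lambda-1)^2/(2C^2)$. Summing and using that $A-I=\Sigma_0^{-1/2}(\Sigma_1-\Sigma_0)\Sigma_0^{-1/2}$ is symmetric, so $\|A-I\|_F^2=\sum_i(\lambda_i-1)^2$, yields the first inequality. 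I expect this to be the main obstacle, since it is exactly the point where the method must accommodate unbounded eigenvalue sequences; this is what distinguishes the bound from Lemma~2.1 of Golubev et {\it al.}~\cite{gol}.

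For the second inequality I would show directly that $\|A-I\|_F\le\|\Sigma_0^{-1}\Sigma_1-I_n\|_F$. Writing $\Sigma_0=UDU^t$ in spectral form, with $D=\mathrm{diag}(d_1,\dots,d_n)$, $d_i>0$, and setting $\tilde S:=U^t(A-I)U=(\tilde s_{ij})$ (which is symmetric), the relation $\Sigma_0^{-1}\Sigma_1-I_n=\Sigma_0^{-1/2}(A-I)\Sigma_0^{1/2}$ gives
\begin{equation*}
\|\Sigma_0^{-1}\Sigma_1-I_n\|_F^2 \;=\; \Tr\bigl(D\tilde S D^{-1}\tilde S\bigr) \;=\; \sum_{i,j} \frac{d_i}{d_j}\,\tilde s_{ij}^2,
\qquad
\|A-I\|_F^2 \;=\; \sum_{i,j} \tilde s_{ij}^2.
\end{equation*}
Pairing the $(i,j)$ and $(j,i)$ terms and invoking $d_i/d_j+d_j/d_i\ge 2$ (AM--GM) gives the second inequality. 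This symmetrization is short and routine; the genuine work is concentrated in the Taylor estimate above.
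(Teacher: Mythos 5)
Your proof is correct and follows essentially the same route as the paper: diagonalize via $A=\Sigma_0^{-1/2}\Sigma_1\Sigma_0^{-1/2}$, reduce the KL divergence to $\tfrac12\sum_i(\lambda_i-1-\log\lambda_i)$, and bound each summand by $(\lambda_i-1)^2/(2C^2)$ using a second-order Taylor remainder together with $\lambda_i\ge C$. The only difference is that you spell out the second (Schur-type) inequality via the spectral decomposition of $\Sigma_0$ and AM--GM, whereas the paper treats it as a known Frobenius-norm fact; this is a welcome bit of added detail but not a different approach.
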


\begin{proof}
Note that
\begin{eqnarray*}
	d_K(P_Y,P_X)
		=
		\frac 12\left(
		\log\left(\det\left(\Sigma_0^{-1/2}\Sigma_1\Sigma_0^{-1/2}\right)^{-1}\right)
		+\Tr\left(\Sigma_{0}^{-1/2}\Sigma_1\Sigma_{0}^{-1/2}\right)-n\right).
\end{eqnarray*}
Introduce $\Sigma:=\Sigma_0^{-1/2}\Sigma_1 \Sigma_0^{-1/2}$ and note that $\Sigma$ is positive definite. Further, by assumption $0< C\Sigma_0\leq \Sigma_1$ and hence $\Sigma_0^{-1/2}\Sigma_1\Sigma_0^{-1/2}\geq CI_n$. This gives $w_i:=\lambda_i\left(\Sigma\right)\geq C, \quad i=1,\ldots,n.$ Recall $\det\left(\Sigma\right)= \prod_{i=1}^n w_i$ and $\Tr\left(\Sigma\right)=\sum_{i=1}^n w_i$. Hence
\begin{align*}
	d_K(P_Y,P_X)
	&=\frac 12
	\left(
	-\sum_{i=1}^n \log\left(w_i\right)+\sum_{i=1}^n w_i -n
	\right)
	=
	\frac 12
	\sum_{i=1}^n
	\left(
	w_i-\log\left(1+w_i-1\right)-1
	\right)
	.
\end{align*}
Assume $x\geq C-1$. Expand $-\log\left(1+x\right)=-x+\left(2\left(1+\xi\right)^{2}\right)^{-1}x^2$ for a suitable $\left|\xi\right|\leq \left|x\right|$. For $C-1\leq x \leq 0$ we have $-\log\left(1+x\right)\leq-x+x^2/\left(2C^2\right)$ and for $x\geq 0$, $-\log\left(1+x\right)\leq-x+x^2/2$. Therefore by Lemma \ref{lem.froblem}
\begin{eqnarray*}
	d_K(P_Y,P_X)
	\leq \frac 1{4C^2} \sum_{i=1}^n \left(w_i-1\right)^2
	=\frac 1{4C^2}
	\left\|\Sigma_0^{-1/2}\left(\Sigma_1-\Sigma_0\right)\Sigma_0^{-1/2}\right\|_F^2.
\end{eqnarray*}
\end{proof}

\begin{rem}
The assumption $0<C\Sigma_0\leq \Sigma_1$ can be relaxed at the cost of an additional symmetrization term in (\ref{eq.kullineq}). More precisely, assume in Lemma \ref{lem.kllem} instead of $0<C\Sigma_0\leq \Sigma_1$ that $0<\Sigma_0, \Sigma_1$ holds. Then we have 
\begin{eqnarray*}
	d_K(P_Y,P_X)
		\leq
		\frac 1{4}
		\left\|\Sigma_0^{-1}\Sigma_1-I_n\right\|_F^2
		+\frac 1{4}
		\left\|\Sigma_1^{-1}\Sigma_0-I_n\right\|_F^2		
		.
\end{eqnarray*}
\end{rem}

Next we prove a lower bound for H\"older continuous functions $\sigma^2$ and $\tau^2$. For this we need some notation. We write $\left[x\right]:=\max_{z \in \mathbb{Z}}\left\{z\leq x\right\}$, $x\in \mathbb{R}$, the integer part of $x$. Let $0<l<u<\infty$ some constants. The class of uniformly bounded H\"older continuous  functions of index $\alpha$ on the interval $I$ is defined by
\begin{align}
	&\mathcal{C}^b\left(\alpha, L\right)
	:= \mathcal{C}^b\left(\alpha, L, \left[l,u\right]\right) := 
	\left\{
	f: f^{\left(p\right)} \ \text{exists for } p=\left[\alpha\right],
	\right. \label{eq.hoelddef} \\  & \quad \left.
	\quad \left|f^{\left(p\right)}(x)-f^{\left(p\right)}(y)\right|
	\leq L\left|x-y\right|^{\alpha-p}, \ \forall x,y \in I, \ 0<l\leq f\leq u <\infty
	\right\}. \notag
\end{align}
For any function $g$ we introduce the forward difference operator $\Delta_i g := g\left(\left(i+1\right)/n\right)-g\left(i/n\right).$ $\log()$ is defined to be the binary logarithm and we write $\mathbb{M}_p$ and $\mathbb{D}_p$ for the space of $p \times p$ matrices and $p \times p$ diagonal matrices over $\mathbb{R}$, respectively. The $i$-th largest eigenvalue of a Hermitian matrix $M$ is defined as $\lambda_i\left(M\right).$

\begin{thm}
\label{thm.lbl2}
Assume model (\ref{eq.mod2}) and $\alpha>1/2$ or model (\ref{eq.mod}), $\alpha\geq1$. Then there exists a $C>0$ (depending only on $\alpha, L, l, u$), such that
\begin{eqnarray*}
	\liminf_{n \rightarrow \infty} \inf_{\hat \sigma^2_n} 
	\sup_{\sigma^2 \in \mathcal{C}^b\left(\alpha,L \right)}
	\E\left(n^{\frac{\alpha}{2\alpha+1}}\left\|\hat \sigma^2-\sigma^2\right\|_2^2
	\right)\geq C.
\end{eqnarray*}
\end{thm}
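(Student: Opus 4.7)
I follow the standard reduction to multiple-hypotheses testing via Fano's method, where the KL divergences are controlled by Lemma \ref{lem.kllem} combined with a careful spectral analysis after passing to finite differences.

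\emph{Construction and Fano reduction.} Fix a $C^{\infty}$-bump $\psi$ supported in $(0,1)$ with $\|\psi\|_{C^{\alpha}}\leq 1$, set $K=K_n:=\lfloor c_0\, n^{1/(4\alpha+2)}\rfloor$, pick a constant $\sigma_0^2\in(l,u)$, and define
\begin{align*}
\sigma_\theta^2(x)\;=\;\sigma_0^2+c_1 K^{-\alpha}\sum_{k=1}^{K}\theta_k\,\psi(Kx-k+1),\qquad \theta\in\{0,1\}^{K}.
\end{align*}
For $c_1>0$ small, every $\sigma_\theta^2$ lies in $\mathcal{C}^{b}(\alpha,L)$. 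By the Varshamov--Gilbert lemma there exists $\Theta_*\subset\{0,1\}^{K}$ with $\theta^{(0)}:=0\in\Theta_*$, $|\Theta_*|\geq 2^{K/8}+1$, and pairwise Hamming distance $\geq K/8$; the pairwise separation in $L_2$-loss is therefore
\begin{align*}
\|\sigma_\theta^2-\sigma_{\theta'}^2\|_2^2\;\geq\; \tfrac{K}{8}\cdot\frac{c_1^2 K^{-2\alpha}\|\psi\|_2^2}{K}\;\asymp\; K^{-2\alpha}\;\asymp\; n^{-\alpha/(2\alpha+1)}.
\end{align*}
Fano's lemma in the form of Tsybakov \cite{tsyb} together with Markov's inequality then reduces the theorem to showing, for some $\kappa<1/8$,
\begin{align*}
\max_{\theta\in\Theta_*}d_K\bigl(P_{\sigma_\theta},P_{\sigma_0}\bigr)\;\leq\;\kappa\log|\Theta_*|\;\asymp\; K.
\end{align*}

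\emph{From KL to Frobenius via Lemma \ref{lem.kllem}.} Under model (\ref{eq.mod2}), $(Y_{1,n},\ldots,Y_{n,n})$ is centered Gaussian with covariance $\Sigma_\theta=\bigl(\int_0^{i/n\wedge j/n}\sigma_\theta^2(s)\,ds\bigr)_{ij}+\tau^2 I_n$. Because $l\leq\sigma_\theta^2\leq u$ and $\tau>0$, the hypothesis $C\Sigma_{0}\leq \Sigma_{\theta}$ of Lemma \ref{lem.kllem} is satisfied uniformly in $\theta$ for some $C$ depending only on $l,u,\tau$, and the lemma yields
\begin{align*}
d_K(P_{\sigma_\theta},P_{\sigma_0})\;\leq\;\frac{1}{4C^{2}}\bigl\|\Sigma_0^{-1/2}(\Sigma_\theta-\Sigma_0)\Sigma_0^{-1/2}\bigr\|_F^{2}.
\end{align*}
The same reduction applies to model (\ref{eq.mod}) after decomposing each signal increment as $\sigma(i/n)(W_{(i+1)/n}-W_{i/n})+(\sigma((i+1)/n)-\sigma(i/n))W_{(i+1)/n}$; the second term has variance $O(n^{-2\alpha})=o(n^{-1})$ precisely when $\alpha\geq 1$, so it is absorbed as a lower-order perturbation of $\Sigma_\theta$.

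\emph{The main obstacle: spectral bound after differencing.} Apply the invertible cumulative-difference map $A\colon (Y_1,\ldots,Y_n)\mapsto (Y_1,Y_2-Y_1,\ldots,Y_n-Y_{n-1})$; KL invariance under bijections gives the same Frobenius bound with $\widetilde\Sigma_\theta:=A\Sigma_\theta A^{\top}$. Because disjoint increments of $X$ are independent, $\widetilde\Sigma_0$ is tridiagonal Toeplitz (up to one boundary row and column) with diagonal $\sigma_0^{2}/n+2\tau^{2}$ and sub- and super-diagonal $-\tau^{2}$, while the perturbation $\widetilde\Sigma_\theta-\widetilde\Sigma_0$ reduces to a diagonal matrix $\Delta$ with entries $\Delta_{ii}=\int_{(i-1)/n}^{i/n}(\sigma_\theta^{2}-\sigma_0^{2})(s)\,ds=O(h_n/n)$, where $h_n:=c_1 K^{-\alpha}$. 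The Toeplitz spectrum $\lambda_k(\widetilde\Sigma_0)=\sigma_0^{2}/n+4\tau^{2}\sin^{2}(\pi k/(2(n+1)))$ transitions at $k\asymp\sqrt{n}$, and splitting the sum at this threshold yields $\|\widetilde\Sigma_0^{-1}\|_F^{2}=\sum_k\lambda_k(\widetilde\Sigma_0)^{-2}\lesssim n^{5/2}$. For diagonal $\Delta$ one has
\begin{align*}
\bigl\|\widetilde\Sigma_0^{-1/2}\Delta\widetilde\Sigma_0^{-1/2}\bigr\|_F^{2}\,=\,\sum_{i,j}\Delta_{ii}\Delta_{jj}\bigl[(\widetilde\Sigma_0^{-1})_{ij}\bigr]^{2}\,\leq\,\|\Delta\|_{\infty}^{2}\|\widetilde\Sigma_0^{-1}\|_F^{2}\,\lesssim\,\frac{h_n^{2}}{n^{2}}\cdot n^{5/2}=h_n^{2}\sqrt{n}\,\asymp\, K,
\end{align*}
and taking $c_1$ small renders the implicit constant smaller than $(\log 2)/32$. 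The delicate step is this sharp sum $\sum_k\lambda_k^{-2}\lesssim n^{5/2}$: the crude bound $n\|\widetilde\Sigma_0^{-1}\|_{\mathrm{op}}^{2}\asymp n^{3}$ would degrade the rate, so exploiting the full Toeplitz spectral picture, not just the smallest eigenvalue, is what produces the correct exponent $\alpha/(2\alpha+1)$.
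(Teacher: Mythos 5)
Your treatment of model (\ref{eq.mod2}) is essentially the paper's proof and is correct: the Varshamov--Gilbert construction, reduction of the KL divergence to a Frobenius norm via Lemma \ref{lem.kllem}, passing to first differences so that the perturbation $\Sigma_\theta-\Sigma_0$ becomes diagonal (disjoint increments of $X$ being independent), the identity $\|\widetilde\Sigma_0^{-1/2}\Delta\widetilde\Sigma_0^{-1/2}\|_F^2=\sum_{i,j}\Delta_{ii}\Delta_{jj}[(\widetilde\Sigma_0^{-1})_{ij}]^2$, and in particular the sharp spectral sum $\sum_k\lambda_k(\widetilde\Sigma_0)^{-2}\lesssim n^{5/2}$ that provides the $n^{1/(4\alpha+2)}$ budget. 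Since your bumps are nonnegative, $\sigma_\theta^2\geq\sigma_0^2$ pointwise so $\Sigma_\theta\geq\Sigma_0$ is immediate; you could take $C=1$ in Lemma \ref{lem.kllem} here.

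The sketch for model (\ref{eq.mod}), however, glosses over the two points that are the actual difficulties. First, the hypothesis $C\Sigma_0\leq\Sigma_\theta$ is \emph{not} automatic from $l\leq\sigma_\theta^2\leq u$ and $\tau>0$ as you claim: in this model the undifferenced covariance is $\Sigma_\theta'=\Pi_\theta Q\Pi_\theta+\tau^2 I_n$ with $Q=(i\wedge j)/n$ and $\Pi_\theta$ diagonal with entries $\sigma_\theta(i/n)$, and the map $\Pi\mapsto\Pi Q\Pi$ is not Loewner-monotone, so $\Pi_\theta\geq\Pi_0$ does not give $\Pi_\theta Q\Pi_\theta\geq C\,\Pi_0 Q\Pi_0$. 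The paper's Lemma \ref{lem.posdefmaj} supplies exactly this nontrivial matrix inequality, $(2+12L^2)^{-1}Q\leq\Sigma Q\Sigma$, and its proof uses $\sigma\in\mathcal{C}(1,L)$, one of the places where $\alpha\geq1$ enters. Second, the remainder $R_1$ with entries $(\Delta_{i-1}\sigma)W_{(i-1)/n}$ cannot be absorbed merely because its pointwise variance is small. Your statement that this variance is ``$O(n^{-2\alpha})=o(n^{-1})$ precisely when $\alpha\geq1$'' is also incorrect as stated ($o(n^{-1})$ already for $\alpha>1/2$, and for $\alpha>1$ the differences are in fact $O(1/n)$, not $O(n^{-\alpha})$). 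What actually has to be bounded is $\|\Sigma_0^{-1/2}\Cov(R_1)\Sigma_0^{-1/2}\|_F^2$, together with the cross-terms $\Cov(X_1',R_1)$ which you do not mention. Since $\lambda_1(\Sigma_0^{-1})\leq n$, one gets $\|\Sigma_0^{-1/2}\Cov(R_1)\Sigma_0^{-1/2}\|_F^2\leq n^2\|\Cov(R_1)\|_F^2\lesssim n^{4-4\min(\alpha,1)}$, and this is $O(1)$ --- hence negligible against $\kappa\log M\asymp K$ --- only when $\alpha\geq1$. That, not the pointwise size of the increment, is why the theorem requires $\alpha\geq1$ for model (\ref{eq.mod}); as written your argument for this model is incomplete.
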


\begin{proof}
Besides working with the observations $Y_{1,n},\ldots,Y_{n,n}$ directly, we consider the sufficient statistics
\begin{eqnarray*}
	\Delta Y
	&=&
	\left(
	Y_{1,n}, Y_{2,n}-Y_{1,n},\ldots,Y_{n,n}-Y_{n-1,n}\right),\\
	\Delta \tilde Y
	&=&
	\left(
	\tilde Y_{1,n}, \tilde Y_{2,n}- \tilde Y_{1,n},\ldots, \tilde Y_{n,n}-\tilde Y_{n-1,n}\right).
\end{eqnarray*}
Let $\Delta_i W:= W_{\left(i+1\right)/n}-W_{i/n}$ and $\Delta_i\epsilon_{i,n}=\epsilon_{i+1,n}-\epsilon_{i,n}.$ We obtain for $i=1,\ldots,n$
\begin{align}
	Y_{i,n}-Y_{i-1,n}
	&=
	\int_{\left(i-1\right)/n}^{i/n}
	\sigma(s)dW_s
	+
	\tau\Delta_{i-1}\epsilon_{i-1,n},
	\notag \\
	\tilde Y_{i,n}-\tilde Y_{i-1,n}
	&=
	\sigma\left(\frac in\right)
	\Delta_{i-1}W
	+
	\left(\Delta_{i-1}\sigma\right)
	W_{\left(i-1\right)/n}
	+\tau\Delta_{i-1}\epsilon_{i-1,n},
	\label{eq.expminimax}
\end{align}
where $\epsilon_{0,n}:=0$, $Y_{0,n}:=0$ and $\tilde Y_{0,n}:=0$. We may write $\Delta Y= X_1+X_2$ and $\Delta \tilde Y= X_1'+ X_2+R_1$, where $ X_1, X_1', X_2$ have components $\int_{\left(i-1\right)/n}^{i/n}\sigma(s)dW_s,$ $\sigma\left(i/n\right)
	\Delta_{i-1}W$  and $\tau\Delta_{i-1}\epsilon_{i-1,n}$, respectively and $\left( R_1\right)_i=\left(\Delta_{i-1}\sigma\right)
	W_{\left(i-1\right)/n}$.

\noindent
Let us first consider model (\ref{eq.mod}), $\alpha\geq1$. By the ease of brevity, we simply point out in a second step the differences to model (\ref{eq.mod2}). The idea is to prove the lower bound by a multiple testing argument. Explicitly, we apply Theorem 2.5 in Tsybakov \cite{tsyb}. The construction of hypothesis is similar to the one given in \cite{tsyb}, Section 2.6.1. We write $\sigma_{\min}, \sigma_{\max}$ for the lower and upper bound of $\sigma^2$, respectively, i.e. $\sigma^2\in \mathcal{C}^b\left(\alpha,L, \left[\sigma_{\min}, \sigma_{\max}\right] \right)$.  Without loss of generality, we may assume that $\sigma_{\min}=1$. Let
\begin{eqnarray}
	K: \mathbb{R}\rightarrow \mathbb{R}^+, \quad K\left(u\right)
	=a\exp\left(-\frac{1}{1-\left(2u\right)^2}\right)
	\mathbb{I}_{\left\{\left|2u\right|\leq 1\right\}}
	,
	\label{eq.Kerneldef}
\end{eqnarray}
where $a$ is such that $K\in \mathcal{C}\left(\alpha,1/2\right).$ Further for some $c>0,$ specified later on, let $m:=\left[ 2^{-1}cn^{1/\left(4\alpha+2\right)}+1\right]$, $h_n=\left(2m\right)^{-1}$, $t_k=h_n\left(k-1/2\right)+1/4$,
\begin{eqnarray*}
	\phi_k\left(t\right)
	:= Lh_n^\alpha K\left(\frac{t-t_k}{h_n}\right),
	\quad 
	k=1,\ldots, m, \quad t\in [0,1].
\end{eqnarray*}
Define $\Omega:=\left\{\omega=\left(\omega_1,\ldots,\omega_m\right),
	\omega_i\in \left\{0,1\right\}\right\}$ and consider the set \\ $\mathcal{E}:=\left\{\sigma^2_\omega\left(t\right) : \sigma^2_\omega\left(t\right) = 1 + \sum_{k=1}^m \omega_k\phi_k(t),  \omega  \in \Omega \right\}.$ Then it holds $\forall \omega, \omega' \in \Omega$
\begin{align}
	\left\|\sigma^2_\omega-\sigma^2_{\omega'}\right\|_2^2
	=
	\int_0^1\left(\sigma^2_\omega(t)-\sigma^2_{\omega'}(t)\right)^2 dt
	= L^2h_n^{2\alpha+1}\left\| K\right\|_2^2\rho\left(\omega,\omega'\right),
	\label{eq.disbound}
\end{align}
where $\rho\left(\omega,\omega'\right)=\sum_{k=1}^m \mathbb{I}_{\left\{\omega_k \neq \omega_k'\right\}}$ is the Hamming distance. By the Varshamov-Gilbert bound (cf. Tsybakov \cite{tsyb}) there exists for all $m\geq 8$ a subset $\left\{\omega_0,\ldots, \omega_M\right\}$ of $\Omega$ such that $\omega_0=\left(0,\ldots,0\right)$, $\rho\left(\omega_i,\omega_j\right)\geq m/8,$  $\forall \ 0\leq j<k\leq M$ and $M\geq 2^{m/8}$. Define the hypothesis $H_i$  for $i=0,\ldots,M$ by the probability measure $P_i$ induced by $\sigma^2_{i,n}:=\sigma^2_{\omega_i,n}$. By Theorem 2.5 in Tsybakov \cite{tsyb} the proof is finished once we have established
\begin{align}
	& (i) \quad  \sigma^2_{i,n}\in \mathcal{C}\left(\alpha,L\right), \ 1\leq \sigma^2_{i,n}\leq \sigma_{\max} \notag \\
	& (ii) \quad 
	\left\| \sigma^2_{i,n}-\sigma^2_{j,n}\right\|_2
	\geq 2s \geq cn^{-\alpha/\left(4\alpha+2\right)}, \quad i\neq j, \ c>0, \label{eq.lbconds} \\
	& (iii) \quad \frac 1M\sum_{j=1}^M d_K\left(P_j,P_0\right)
	\leq \kappa \log M, \quad j=1,\ldots,M, \quad \kappa<1/10. \notag
\end{align}
$(i)$ is obviously fulfilled for sufficiently large $n$. By (\ref{eq.disbound}) it follows for $m>8$, $\left\| \sigma^2_{i,n}-\sigma^2_{j,n}\right\|_2\geq 16^{-1}Lh_n^{\alpha}\left\|K\right\|_2$ and hence $(ii)$. $(iii)$ We apply Lemma \ref{lem.kllem} in combination with Lemma \ref{lem.posdefmaj}. Let $\Pi_k \in \mathbb{D}_n$, $k=1,\ldots,M$ with entries $\left(\Pi_k\right)_{i,j}=\sigma_{k,n}\left(i/n\right)\delta_{i,j}$. For the observation vector $\tilde Y=\left(\tilde Y_{1,n},\ldots,\tilde Y_{n,n}\right)$, we have $\tilde Y\sim \mathcal{N}\left(0,\Sigma'_k\right)$ under $H_k$, $k=0,\ldots,M$, where
\begin{eqnarray*}
	\Sigma'_0&=&\left(\frac{i\wedge j}n\right)_{i,j=1,\ldots,n}+\tau^2 I_n, \\
	\Sigma'_k&=&\Pi_k\left(\frac{i\wedge j}n\right)_{i,j=1,\ldots,n}
		\Pi_k +\tau^2 I_n, \quad k=1,\ldots,M.
\end{eqnarray*}
Because of $\sigma_{k,n}\geq 1$ it follows $\left|\sigma_{k,n}\left(x\right)-\sigma_{k,n}\left(y\right)\right| \leq \left|\sigma_{k,n}^2\left(x\right)-\sigma_{k,n}^2\left(y\right)\right|\leq L\left|x-y\right|,$ i.e. $\sigma \in \mathcal{C}\left(1,L\right).$ By Lemma \ref{lem.posdefmaj}, $0<\left(2+12L^2\right)^{-1}\Sigma_0'<\Sigma_k'.$ These inequalities remain valid under any invertible linear transformations of $\tilde Y$. Hence if we denote by $\Sigma_k$ the covariance of $\Delta \tilde Y$ under $H_k$ ($k=0,\ldots,M$) it follows $0< \left(2+12L^2\right)^{-1}\Sigma_0 < \Sigma_k$ and we may apply Lemma \ref{lem.kllem} with $C=\left(2+12L^2\right)^{-1}$. Hence
\begin{eqnarray*}
	d_K\left(P_k,P_0\right)
	&\leq&
	\frac{\left(2+12L^2 \right)^2}4
	\left\|\Sigma_0^{-1}\Sigma_k-I_n\right\|_F^2
	\quad k=1,\ldots,M.
\end{eqnarray*}
Let for $1\leq i,j\leq n$, the matrix $A$ be defined by
\begin{align}
	\left(A\right)_{i,j}
	:=
	\begin{cases}
	2 \quad \text{for} \quad  i=j \quad  \text{and} \quad  i>1  \\
	-1 \quad \text{for} \quad \left|i-j\right|=1 \\
	1 \quad \text{for} \quad i=j=1 \\
	0 \quad \text{else}
	\end{cases}
	\label{eq.Adef}
\end{align}
and let $\Gamma_k:=\Pi_k-I_n$, $k=1,\ldots,M$. Clearly, $\Gamma_k\leq Lh_n^\alpha \left\|K\right\|_\infty.$ We abbreviate the covariance of two column vectors $X$ and $Y$ as the matrix with covariances of $XY^t$. Then we have the explicit representations
\begin{eqnarray*}
	\Sigma_0&=& \frac 1n I_n+\tau^2 A,\\
	\Sigma_k&=& \Sigma_0+\frac 2n \Gamma_k+ \frac1n \Gamma_k^2+
	\Cov_{H_k}\left(X_1',R_1\right)
	+\Cov_{H_k}\left(R_1,X_1'\right)
	+\Cov_{H_k}\left(R_1\right),
\end{eqnarray*}
where the subscript $H_k$ means that these covariances are taken with respect to the probability measure induced by $H_k$. We remark that due to $\Sigma_0\geq I_n/n$ it holds $\lambda_1\left(\Sigma_0^{-1}\right)=\lambda_n^{-1}\left(\Sigma_0\right)\leq \lambda_n^{-1}\left(I_n/n\right)=n.$ This yields using Lemma \ref{lem.p}, (i)
\begin{align*}
	d_K\left(P_k,P_0\right)
	&\leq
	5
	\frac{\left(2+12L^2\right)^2}4
	\left(
	\frac 4{n^2} \left\|\Sigma_0^{-1}\Gamma_k\right\|_F^2
	+
	\frac 1{n^2}\left\|\Sigma_0^{-1}\Gamma_k^2\right\|_F^2
	+ 
	\left\|\Sigma_0^{-1}\Cov_{H_k}\left(X_1', R_1\right)\right\|_F^2
	\right. \notag \\ 
	&\quad \quad + \left. 
	\left\|\Sigma_0^{-1}\Cov_{H_k}\left(R_1,X_1'\right)\right\|_F^2
	+
	\left\|\Sigma_0^{-1}\Cov_{H_k}\left( R_1\right)\right\|_F^2
	\right) \notag \\
	&\leq
	5\left(1+6L^2\right)^2
	\left(
	\left(4L^2\left\|K\right\|_\infty^2+L^4\left\|K\right\|_\infty^4h_n^{2\alpha}\right)
	h_n^{2\alpha}n^{-2} \left\|\Sigma_0^{-1}\right\|_F^2
	\right. \notag  \\ 
	&\quad \quad 
	+ \left. 
	n^2\left\|\Cov_{H_k}\left(X_1',R_1\right)\right\|_F^2
	+
	n^2\left\|\Cov_{H_k}\left( R_1 , X_1'\right)\right\|_F^2
	+
	n^2\left\|\Cov_{H_k}\left( R_1\right)\right\|_F^2
	\right)
	.
\end{align*}
The remaining part of the proof is concerned with bounding these terms. We make use of the properties on Frobenius norms collected in Lemma \ref{lem.froblem} and obtain
\begin{eqnarray*}
	\left\|\Cov_{H_k}\left( R_1\right)\right\|_F^2
	&\leq&
	n^{-4}L^4\left\|\left(\frac{\left(i \wedge j\right)-1}n\right)_{i,j=1,\ldots,n}\right\|_F^2 
	\leq L^4n^{-2}.
\end{eqnarray*}
Let $E \in \mathbb{M}_n$ given by
\begin{eqnarray*}
	\left(E\right)_{i,j}:=
	\begin{cases}
	1 \quad \text{if} \quad j>i \\
	0 \quad \text{otherwise}
	\end{cases}
	.
\end{eqnarray*}
Also let $\Delta\Pi_k \in \mathbb{D}_n$, $k=1,\ldots,M$ with entries $\left(\Delta\Pi_k\right)_{i,j}= \left(\Delta_{i-1}\sigma_{k,n}\right)\delta_{i,j}$, where $\sigma_{k,n}(0)=0$. Then $\Cov_{H_k}\left(X_1', R_1\right)=n^{-1}\Pi_k E \left(\Delta\Pi_k\right)$ and for $n$ large enough
\begin{eqnarray*}
	\left\|\Cov_{H_k}\left(X_1', R_1\right)\right\|_F^2
	&\leq&
	4n^{-2}
	\Tr\left(E\left(\Delta\Pi_k\right)^2E^t\right)
	\leq
	4L^2n^{-4}\left\|E\right\|_F^2\leq 4L^2n^{-2}.
\end{eqnarray*}
With the same arguments we can bound  $\left\|\Sigma_0^{-1}\Cov_{H_k}\left( R_1, X_1'\right)\right\|_F^2$. Further we have for $j=1,\ldots,M$ using Lemma \ref{lem.eigenvalues}
\begin{align*}
	\left\|\Sigma_0^{-1}\right\|_F^2
	&\leq
	\sum_{i=1}^n\left(\frac 1n+\tau^2i^2/\left(4n^2\right)\right)^{-2}
	\leq C_\tau n^{5/2},
\end{align*}
for a constant $C_\tau$ only depending on $\tau.$ This gives
\begin{align*}
	&\frac1M \sum_{j=1}^M d_K\left(P_j,P_0\right) \\
	&\quad \leq
	5\left(1+6L^2\right)^2
	\left( \left(4L^2\left\|K\right\|_\infty^2+L^4\left\|K\right\|_\infty^4h_n^{2\alpha}\right)n^{1/2}h_n^{2\alpha} C_\tau+ L^4+8L^2
	\right) \\
	&\quad \leq C_{\tau,L,\left\|K\right\|_\infty}n^{1/2}h_n^{2\alpha}
	\leq C_{\tau,L,\left\|K\right\|_\infty} c^{-2\alpha-1}2m
	\leq
	\kappa \log M,
\end{align*}
where $C_{\tau,L,\left\|K\right\|_\infty}$ is independent of $n$ and the last inequality holds if $$c > \left(16 C_{\tau,L,\left\|K\right\|_\infty} \kappa^{-1}\right)^{1/\left(2\alpha+1\right)},$$ using the Varshamov-Gilbert bound.

The proof for model (\ref{eq.mod2}) and $\alpha>1/2$ is almost the same as for Theorem \ref{thm.lbl2}. So we only sketch it here. Note that Lemma \ref{lem.kllem} can be applied directly without use of Lemma \ref{lem.posdefmaj}. The construction of hypothesis is the same. Let $\Pi_k\in \mathbb{D}_n$, $k=1,\ldots,M$ defined by $\left(\Pi_k\right)_{i,i}:=\int_{\left(i-1\right)/n}^{i/n}\sigma^2_{k,n}\left(s\right)ds$ and $\Gamma_k:=\Pi_k-I_n$. With the notation as in Theorem \ref{thm.lbl2} the problem can be reduced to a testing problem where we have to test the hypothesis that a centered random vector has covariance matrix $\Sigma_0=I_n+A$ against the $M$ alternative covariance matrices $\Sigma_k=\Sigma_0+\Gamma_k, \ k=1,\ldots,M$. With $\max_{1\leq i\leq n}\max_{1\leq k\leq M}\left(\Gamma_k\right)_{i,i}=O\left(h_n^{\alpha}\right)$ and Lemma \ref{lem.kllem} the Theorem follows.
\end{proof}

The proof of the lower bound for estimation of $\sigma^2$ in model (\ref{eq.mod3}) differs from the previous one. Instead of using first order differences we transform the data by taking second order differences. The key step is to show that for constant $\sigma$ this is "close" to the model where we observe $Z_{i,n}=\sigma n^{-3/2} \eta_{i,n}+\tau \xi_{i,n}, i=1,\ldots,n, \ \eta_{i,n}\sim \mathcal{N}\left(0,1\right),$ i.i.d. Here, $\xi=\left(\xi_{1,n},\ldots, \xi_{n,n}\right)$ is a particular $\MA(2)-$process, independent of $\eta=\left(\eta_{1,n},\ldots,\eta_{n,n}\right).$

\begin{thm}
Assume model (\ref{eq.mod3}) and $\alpha>1/2$. Then there exists a $C>0$ (depending only on $\alpha, L, l, u$), such that
\begin{eqnarray*}
	\liminf_{n \rightarrow \infty} \inf_{\hat \sigma^2_n} 
	\sup_{\sigma^2 \in \mathcal{C}^b\left(\alpha,L \right)}
	\E\left(n^{\frac{\alpha}{4\alpha+2}}\left\|\hat \sigma^2-\sigma^2\right\|_2^2
	\right)\geq C.
\end{eqnarray*}
Furthermore, there exists a $\tilde C>0$ (depending on $\sigma_{\min},\sigma_{\max}$) such that for constant $\sigma$ and $0<\sigma_{\min}<\sigma_{\max}<\infty$
\begin{eqnarray*}
	\liminf_{n \rightarrow \infty} \inf_{\hat \sigma^2_n} 
	\sup_{\sigma^2 \in \left[\sigma_{\min},\sigma_{\max}\right]}
	\E\left(n^{1/4}\left(\hat \sigma^2-\sigma^2\right)^2
	\right)\geq \tilde C.
\end{eqnarray*}
\end{thm}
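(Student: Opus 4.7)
\emph{Strategy and setup.} The plan is to adapt the multiple-testing blueprint of Theorem~\ref{thm.lbl2}, but to pre-process the data by \emph{second-order} differences $\Delta^2 \bar Y_{i,n}:=\bar Y_{i,n}-2\bar Y_{i-1,n}+\bar Y_{i-2,n}$, which form a sufficient statistic. Stochastic Fubini gives $\int_0^{i/n}X_s\,ds=\int_0^{i/n}(i/n-u)\sigma(u)\,dW_u$, so the signal component of $\Delta^2\bar Y_{i,n}$ is a stochastic integral over $((i-2)/n,i/n)$ against a triangular kernel of height $\le 1/n$; its covariance is tridiagonal of order $n^{-3}$. The noise becomes the MA(2) process $\xi_i=\epsilon_i-2\epsilon_{i-1}+\epsilon_{i-2}$, whose covariance $\tau^2 M$ has spectral density $16\tau^2\sin^4(\lambda/2)$ and thus low-frequency eigenvalues of order $\tau^2 k^4/n^4$. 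Writing $\Sigma_0$ for the covariance of $\Delta^2 \bar Y$ under $\sigma^2\equiv 1$, the two regimes combine to $\lambda_k(\Sigma_0)\asymp n^{-3}$ for $k\lesssim n^{1/4}$ and $\lambda_k(\Sigma_0)\asymp \tau^2 k^4/n^4$ thereafter, whence
\[
\|\Sigma_0^{-1}\|_F^2\;\lesssim\; n^{1/4}\!\cdot\! n^6+\sum_{k>n^{1/4}}\frac{n^8}{\tau^4 k^8}\;\lesssim\; n^{25/4}.
\]
The principal technical obstacle is making this spectral analysis rigorous: since $\Sigma_0$ is not circulant, the transition at $k\asymp n^{1/4}$ must be justified by two-sided comparison with the corresponding circulant operator, in the spirit of Lemma~\ref{lem.posdefmaj} and Lemma~\ref{lem.eigenvalues}.

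\emph{Parametric claim.} I apply Le Cam's two-point method to $\sigma_0^2$ and $\sigma_1^2=\sigma_0^2+c\,n^{-1/8}$. For constant $\sigma$ the signal block scales linearly in $\sigma^2$, hence $\Sigma_1-\Sigma_0=(\sigma_1^2-\sigma_0^2)\,n^{-3}T$ with fixed tridiagonal $T$, so the spectral norm satisfies $\|\Sigma_1-\Sigma_0\|\lesssim c\,n^{-3-1/8}$. For $c$ small the eigenvalues of $\Sigma_0^{-1/2}\Sigma_1\Sigma_0^{-1/2}$ are uniformly bounded away from $0$, so Lemma~\ref{lem.kllem} applies with a fixed constant; combined with the standard inequality $\|AB\|_F\le\|A\|_F\,\|B\|$ this yields
\[
d_K(P_1,P_0)\;\lesssim\;\|\Sigma_0^{-1}\|_F^2\,\|\Sigma_1-\Sigma_0\|^2\;\lesssim\; n^{25/4}\cdot c^2 n^{-1/4-6}\;=\;c^2.
\]
Choosing $c$ small makes the KL bounded, and the two-hypothesis Le Cam lemma (equivalently Theorem~2.2 of \cite{tsyb}) delivers the claimed $n^{-1/8}$ lower bound.

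\emph{Nonparametric claim.} I retain the bump construction of Theorem~\ref{thm.lbl2} with the rescaling $m=[2^{-1}c\,n^{1/(8\alpha+4)}+1]$ and $h_n=(2m)^{-1}$; Varshamov--Gilbert then yields $M\ge 2^{m/8}$ hypotheses with $L_2$ separation $\gtrsim h_n^\alpha \asymp n^{-\alpha/(8\alpha+4)}$, establishing condition (ii) of Theorem~2.5 in \cite{tsyb}. Under $H_k$ the covariance $\Sigma_k$ differs from $\Sigma_0$ only in the signal block, and a secondary technical point is that this block does not factor as $\Pi\,n^{-3}T\,\Pi$ for a single diagonal $\Pi$; however, the H\"older assumption $\alpha>1/2$ controls both the diagonal entries (of size $O(h_n^\alpha)$) and the off-diagonal contributions, so that $\|\Sigma_k-\Sigma_0\|\lesssim h_n^\alpha n^{-3}$. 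The same Frobenius/spectral product then gives
\[
d_K(P_k,P_0)\;\lesssim\;\|\Sigma_0^{-1}\|_F^2\,\|\Sigma_k-\Sigma_0\|^2\;\lesssim\; n^{25/4}\cdot h_n^{2\alpha} n^{-6}\;=\;h_n^{2\alpha} n^{1/4}.
\]
Since $h_n^{2\alpha}n^{1/4}\asymp c^{-2\alpha} n^{1/(8\alpha+4)}$ while $\log M\gtrsim c\,n^{1/(8\alpha+4)}$, taking $c$ sufficiently large verifies Tsybakov's condition (iii) with $\kappa<1/10$, completing the proof.
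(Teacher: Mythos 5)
Your proposal correctly identifies the overall architecture of the argument — second-order differences, multiple testing with $m\asymp n^{1/(8\alpha+4)}$, the $\|\Sigma_0^{-1}\|_F^2\lesssim n^{25/4}$ bound, and Lemma~\ref{lem.kllem} — which is the paper's blueprint as well. The parametric two-point argument and the final exponent bookkeeping also check out. However, there is a genuine gap at the crucial step where you must control $\Sigma_k-\Sigma_0$ and verify that Lemma~\ref{lem.kllem} applies.

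You assert that ``the H\"older assumption controls both the diagonal entries and the off-diagonal contributions, so that $\|\Sigma_k-\Sigma_0\|\lesssim h_n^\alpha n^{-3}$,'' but this claim does not follow from H\"older continuity alone and is precisely what requires an idea. The paper's central observation is that the signal covariance after differencing is a \emph{linear} functional of $\sigma^2$, so $\Sigma_k-\Sigma_0$ is itself the covariance matrix of $Y^*$ under the diffusion coefficient $(\sigma_{k,n}^2-\sigma_{0,n}^2)^{1/2}$ with $\tau=0$. Since $\omega_0=(0,\dots,0)$ forces $\sigma_{0,n}^2\le\sigma_{k,n}^2$, this coefficient is real, hence $\Sigma_k-\Sigma_0\ge 0$ automatically; Lemma~\ref{lem.kllem} then applies with $C=1$ without any further spectral comparison. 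For the upper bound, the paper splits $Y_i^*$ into the two stochastic integrals on adjacent intervals, applies the Loewner inequality $\Var(X+Y)\le 2\Var X+2\Var Y$ (Lemma~\ref{lem.p}(iii)) under the reduced coefficient, and thereby obtains $\Sigma_k-\Sigma_0\le \Gamma$ for $\Gamma$ an explicit \emph{scalar} multiple of the identity of order $h_n^\alpha n^{-3}$. Both the positivity and the diagonal upper bound come from this linearity-in-$\sigma^2$ trick; you would need to reconstruct it, because merely knowing the diagonal entries are $O(h_n^\alpha n^{-3})$ does not give $\|\Sigma_k-\Sigma_0\|_{op}\lesssim h_n^\alpha n^{-3}$ (a tridiagonal PSD matrix with bounded diagonal can still have large off-diagonal contributions to the operator norm), nor does it establish $C\Sigma_0\le\Sigma_k$.

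A secondary point: the paper does not bound $\|\Sigma_0^{-1}\|_F^2$ by comparison to a circulant. It computes $\Sigma_0=\frac{1}{n^3}I_n-\frac{1}{6n^3}A+\frac{\sqrt 2-1}{6n^3}V_1+\tau^2(A^2+V_2)$ explicitly, absorbs the rank-one perturbations $V_1,V_2$ via eigenvalue interlacing (Lemma~\ref{lem.p}(ii)), and then uses the exact eigenvalues $\lambda_{n-i+1}(A)\gtrsim i^2/n^2$ (Lemma~\ref{lem.eigenvalues}) with a split at $r_n=[n^{1/4}]$. Your circulant-comparison sketch would need independent justification and is not the route the paper takes. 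Finally, once you have $0\le\Sigma_k-\Sigma_0\le\Gamma$ with $\Gamma$ proportional to $I_n$, it is cleaner to bound directly $\|\Sigma_0^{-1/2}(\Sigma_k-\Sigma_0)\Sigma_0^{-1/2}\|_F\le\|\Sigma_0^{-1/2}\Gamma\Sigma_0^{-1/2}\|_F=(\text{const})\,h_n^\alpha n^{-3}\|\Sigma_0^{-1}\|_F$ via Lemma~\ref{lem.froblem}(iii), rather than pass through the operator norm inequality $\|SDS\|_F\le\|D\|_{op}\|\Sigma_0^{-1}\|_F$; your version also works, but the paper's form is what the explicit constants in the argument track.
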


\begin{proof}
Except for changing the definition of $m:=\left[ 2^{-1}cn^{1/\left(4\alpha+2\right)}+1\right]$ to $m:=\left[ 2^{-1}cn^{1/\left(8\alpha+4\right)}+1\right]$ we construct the same hypothesis as in the proof of Theorem \ref{thm.lbl2}. In order to prove the first part of the statement it remains to show (iii) in (\ref{eq.lbconds}). We consider second order differences, i.e.
\begin{align*}
	Y_i^*:=\Delta^2_{i-1} \bar Y_i, \ \bar Y_0:=0, \ i=2,3,\ldots, 
	\ Y_1^*:= \sqrt{2}\bar Y_1,
\end{align*}
where $\Delta^2_i=\Delta_i \circ \Delta_i.$ Note that for $i\geq 2$
\begin{align*}
	Y_i^*=\int_{\left(i-1\right)/n}^{i/n} \left(\frac in-s\right)\sigma\left(s\right) dW_s
	+\int_{\left(i-2\right)/n}^{\left(i-1\right)/n} \left(s-\frac {i-2}n\right)\sigma\left(s\right) dW_s
	+\tau \Delta^2_i \epsilon_i
\end{align*}
Let $Y^*:=\left(Y^*_{1},\ldots,Y^*_n\right).$ Obviously, this is equivalent to observing $\bar Y.$ The covariance of $Y^*$ under hypothesis $H_k$ is denoted by $\Sigma_k, k=0,\ldots,M.$ Since by construction $\sigma_{0,n}^2\leq \sigma_{k,n}^2,$ it follows from elementary computations that $\Sigma_k-\Sigma_0$ is the covariance of $Y^*$ under $\sigma=\left(\sigma_{k,n}^2-\sigma_{0,n}^2\right)^{1/2}$ and $\tau=0.$ From this we conclude that 
\begin{align}
	\Sigma_k-\Sigma_0\geq 0,
	\label{eq.sks}
\end{align}
i.e. $\Sigma_0\leq \Sigma_k.$ Note that $\Var\left(X+Y\right)\leq 2\Var X+2\Var Y$ (in the sense of Loewner ordering, see also Lemma \ref{lem.p}, (iii)). This yields together with (\ref{eq.sks})
\begin{align*}
	\Sigma_k-\Sigma_0\leq \Gamma,
\end{align*}
where $\Gamma$ is diagonal with entries
\begin{align*}
	\left(\Gamma\right)_{i,j}=
	\frac{4Lh_n^\alpha \left\|K\right\|_\infty}{3n^3}\delta_{i,j},
\end{align*}
and $\delta_{i,j}$ denotes the Kronecker delta. Let $P_k$ denote the probability measure of $Y^*_k$ under $H_k.$ Due to $\Sigma_0\leq \Sigma_k$ we may apply Lemma \ref{lem.kllem} and obtain
\begin{align}
	d_K\left(P_k,P_0\right)
	\leq \frac 14\left\|\Sigma_0^{-1/2}\left(\Sigma_k-\Sigma_0\right)\Sigma_0^{-1/2}\right\|_F^2
	\leq \frac{16L^2h_n^{2\alpha} \left\|K\right\|_\infty^2}{9n^6}
	\left\|\Sigma_0^{-1}\right\|_F^2.
	\label{eq.klbound}
\end{align}
Direct computations give
\begin{align*}
	\Sigma_0
	=\frac 1{n^3}I_n-\frac 1{6n^3}A+\frac{\sqrt{2}-1}{6n^3}
	V_1
	+\tau^2\left(A^2+V_2\right),
\end{align*}
where $A$ is as defined in (\ref{eq.Adef}),
\begin{align*}
	\left(V_{1}\right)_{i,j}:=
	\begin{cases}
	1 &\text{if} \ i=1, j=2 \ \text{or} \  i=2, j=1,\\
	0 &\text{otherwise}.
	\end{cases}
\end{align*}
and $V_2$ is symmetric and $\left(V_2\right)_{i,j}\neq 0$ only if $i,j\leq 3.$ Obviously, the smallest eigenvalue of $V_1$ is $-1.$ Hence we can estimate by Lemma \ref{lem.eigenvalues},
\begin{align*}
	\Sigma_0\geq \frac1{6n^3}I_n+\tau^2\left(A^2+V_2\right).
\end{align*}
Since $V_2$ has only non-zero entries in the first three rows and columns, it has only three non-zero eigenvalues. By standard bounds on eigenvalues (see Lemma \ref{lem.p}, (ii)) this allows to estimate for $i\geq 3$
\begin{align*}
	\lambda_{n-i}\left(A^2+V_2\right)
	\geq 
	\lambda_{n-i+3}\left(A^2\right)+\lambda_{n-3}\left(V_2\right)
	= \lambda_{n-i+3}^2\left(A\right).
\end{align*}
Let $r_n:=\left[n^{1/4}\right].$ Then for sufficiently large $n$ by Lemma \ref{lem.eigenvalues}
\begin{align*}
	\left\|\Sigma_0^{-1}\right\|_F^2
	&=\sum_{i=1}^n\lambda_i^2\left(\Sigma_0^{-1}\right)
	\leq \sum_{i=1}^{r_n} \lambda_{n-i+1}^{-2}\left(\frac 1{6n^3}I_n\right)
	+\tau^4 \sum_{i=r_n+1}^n\lambda_{n-i+1}^{-2}
	\left(A^2+V_2\right) \\
	&\leq 
	36n^{25/4}+\tau^4\sum_{i=r_n+1}^n \lambda_{n-i+4}^{-4}\left(A\right)
	\leq
	36n^{25/4}+4^4\tau^4\sum_{i=r_n+1}^n \frac{n^8}{\left(i-3\right)^8} \\
	&\leq 36n^{25/4}+2^{24}n^{25/4}\frac 1{n^{1/4}}
	\sum_{i=r_n+1}^n \frac 1{\left(i/n^{1/4}\right)^8}.
\end{align*}
Because the last part converges as a Riemann sum to a finite integral, we may find a constant $C_\tau$ depending only on $\tau$ such that $\left\|\Sigma_0^{-1}\right\|_F^2 \leq C_\tau n^{25/4}.$ This gives with (\ref{eq.klbound}), 
\begin{align*}
	\frac 1M\sum_{k=1}^M d_K\left(P_k,P_0\right)
	\leq 
	\frac{16}9 C_\tau L^2\left\|K\right\|_{\infty}^2
	h_n^{2\alpha}n^{1/4}
	\leq \frac{2^8}9 C_\tau L^2\left\|K\right\|_{\infty}^2
	c^{-2\alpha-1}m \leq \kappa \log M
\end{align*}
whenever $c\geq \left(2^8/9 C_\tau L^2\left\|K\right\|_{\infty}^2 \kappa^{-1} \right)^{1/\left(2\alpha+1\right)}.$

\bigskip

\noindent
In order to prove the second statement of the theorem we consider two hypothesis $\sigma_0^2:=\sigma_{\min}$ and $\sigma^2_{1}:= \sigma_{\min}+cn^{-1/8}$ and apply Theorem 2.5 in Tsybakov \cite{tsyb} for $M=2.$ Using the bounds from the first part, the remaining part of the proof is straightforward and thus omitted.
\end{proof}

\begin{appendices}

\section{Technical tools}\label{app}

\begin{lem}
\label{lem.eigenvalues}
Let $A, Q^{-1}$ be as in (\ref{eq.Adef}) and (\ref{eq.Qminus1}), respectively. Then 
\begin{align*}
	\lambda_{n-i+1}\left(A\right)=\lambda_{n-i+1}\left(Q^{-1}\right)
	= 4\sin^2\left(\frac{\left(2i-1\right)\pi}{4n+2}\right)
	\geq \frac{i^2}{4n^2}, \quad i=1,\ldots,n
\end{align*}
\end{lem}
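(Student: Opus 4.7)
The plan is to diagonalise $A$ explicitly via a trigonometric eigenbasis adapted to its modified top row, and then deduce the lower bound from a standard sine estimate. The matrix $A$ is essentially a discrete negative Laplacian whose $(1,1)$-entry has been changed from $2$ to $1$, so I would look for eigenvectors of the form $v^{(k)}_i = \cos(\theta(i-\tfrac{1}{2}))$ with a half-integer phase shift. For any $\theta$, the interior three-term recurrence $-v_{i-1}+2v_i-v_{i+1}=\lambda v_i$ (rows $2\le i\le n-1$) is satisfied exactly when $\lambda = 2-2\cos\theta = 4\sin^2(\theta/2)$. The key observation is that the shift $i-\tfrac12$ is precisely what makes the anomalous top boundary row $v_1-v_2=\lambda v_1$ hold automatically for every $\theta$: a short product-to-sum computation gives $v_1-v_2 = 2\sin\theta\sin(\theta/2) = 4\sin^2(\theta/2)\cos(\theta/2) = \lambda v_1$.

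Next I would use the bottom boundary row $-v_{n-1}+2v_n=\lambda v_n$ to quantise $\theta$. Expanding $2\cos\theta\cos(\theta(n-\tfrac12)) = \cos(\theta(n-\tfrac32)) + \cos(\theta(n+\tfrac12))$ reduces the condition to $\cos(\theta(n+\tfrac12))=0$, i.e.\ $\theta_k = (2k-1)\pi/(2n+1)$ for $k=1,\ldots,n$. The associated $n$ values $4\sin^2(\theta_k/2)$ are strictly increasing in $k$ and distinct in $(0,4)$, so by symmetry of $A$ they exhaust its spectrum, and ordered by decreasing magnitude they yield $\lambda_{n-i+1}(A)=4\sin^2((2i-1)\pi/(4n+2))$. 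The corresponding claim for $Q^{-1}$ follows because, by its definition in (\ref{eq.Qminus1}), $Q^{-1}$ is obtained from $A$ by a reversal of index order (conjugation by the flip permutation), which preserves the spectrum.

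Finally, the lower bound $i^2/(4n^2)$ is a routine consequence of the elementary inequality $\sin x\ge 2x/\pi$ on $[0,\pi/2]$, which applies here since $(2i-1)\pi/(4n+2)\le\pi/2$ for $i\le n$. This yields $4\sin^2((2i-1)\pi/(4n+2))\ge 16(2i-1)^2/(4n+2)^2$, and the claimed bound follows from the elementary arithmetic check $8n(2i-1)\ge i(4n+2)$ for all $1\le i\le n$, $n\ge 1$.

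The only genuine obstacle is identifying the correct half-integer phase shift in the cosine ansatz so that the $(1,1)$-anomaly of $A$ is absorbed by the top boundary row; once that is seen, everything else is elementary trigonometry and arithmetic, and I do not expect any further difficulty.
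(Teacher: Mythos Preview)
Your proposal is correct and follows essentially the same route as the paper. The paper writes down the sine eigenvectors $v_i=(\sin(jx_i))_{j=1}^n$ of $Q^{-1}$ with $x_i=(2i-1)\pi/(2n+1)$ and then observes that flipping the index order gives eigenvectors of $A$; your cosine ansatz $v^{(k)}_j=\cos(\theta_k(j-\tfrac12))$ for $A$ is precisely the flipped version of these sine vectors (since $\cos(\theta_k(n+\tfrac12)-\theta_k j)=\pm\sin(\theta_k j)$ once $\theta_k(n+\tfrac12)$ is an odd multiple of $\pi/2$), so the two diagonalisations coincide. For the lower bound the paper uses $\sin(x\pi)\ge x\pi/2$ on $[0,1/2]$ rather than Jordan's inequality $\sin x\ge 2x/\pi$, but both are elementary and lead to the stated estimate.
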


\begin{proof}
The $i$-th eigenvector $v_i$ of $Q^{-1}$ is given by $v_i=\left(\sin\left(x_i\right), \sin\left(2x_i\right),\ldots, \sin\left(nx_i\right)\right),$ where $x_i:= \left(2i-1\right)\pi/\left(2n+1\right), i=1,\ldots,n.$ The corresponding eigenvalues are
\begin{align*}
	\lambda_{n-i+1}\left(Q^{-1}\right)
	= 4\sin^2\left(\frac{\left(2i-1\right)\pi}{4n+2}\right).
\end{align*}
If $v_i=\left(v_{i,1},\ldots,v_{i,n}\right)$ is an eigenvector of $Q^{-1}$ so is $\tilde v_i =\left(v_{i,n},\ldots,v_{1,1}\right)$ an eigenvector of $A$ with the same eigenvalue. Using $x\pi/2\leq \sin\left(x\pi\right)$ whenever $x\in \left[0,1/2\right],$ we obtain
\begin{align*}
	4\sin^2\left(x_i\pi/2\right)
	\geq x_i^2\pi^2/4\geq \frac{i^2}{4\left(2n+1\right)^2}\pi^2 \geq \frac{i^2}{4n^2}.
\end{align*}
\end{proof}

\begin{lem}
\label{lem.posdefmaj}
Let $\sigma\geq1$ be a H\"older $\mathcal{C}(1,L)$ function and let $\Sigma \in \mathbb{D}_n$ be a diagonal matrix with $\Sigma_{i,i}=\sigma\left(i/n\right)$. Further introduce $Q:= \left(i\wedge j\right)_{i,j=1,\ldots,n}$. Then 
\begin{eqnarray*}
	\left(2+12L^2\right)^{-1}Q
	\leq
	\Sigma Q \Sigma
\end{eqnarray*}
in the sense of partial Loewner ordering of symmetric matrices.
\end{lem}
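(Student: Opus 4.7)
The plan is to reduce the claim to a manageable inverse inequality and then exploit a summation-by-parts (Abel) decomposition. Since $\sigma \geq 1$ the matrix $\Sigma$ is invertible, so by the congruence $z = \Sigma^{-1} y$ the stated Loewner inequality $(2+12L^2)^{-1}Q \leq \Sigma Q \Sigma$ is equivalent to the reverse inequality $\Sigma^{-1} Q \Sigma^{-1} \leq (2+12L^2)\,Q$. This dual form is advantageous because the diagonal entries of $\Sigma^{-1}$ lie in $(0,1]$, providing a uniform upper bound that does not blow up with $L$.

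To attack the dual form, I would use the representation $z^T Q z = \sum_{k=1}^n u_k^2$ with $u_k := \sum_{i \geq k} z_i$, obtained from $i \wedge j = |\{k : k \leq i \wedge j\}|$. The same representation applied to $\Sigma^{-1} z$ gives $z^T \Sigma^{-1} Q \Sigma^{-1} z = \sum_k \bigl(\sum_{i \geq k} z_i/\sigma(i/n)\bigr)^2$, and Abel summation on the inner sum (using $z_i = u_i - u_{i+1}$ with $u_{n+1}:=0$) yields
\[
\sum_{i \geq k} \frac{z_i}{\sigma(i/n)} \;=\; \frac{u_k}{\sigma(k/n)} \;+\; \sum_{i > k} \tilde\tau_i\, u_i, \qquad \tilde\tau_i := \frac{1}{\sigma(i/n)} - \frac{1}{\sigma((i-1)/n)}.
\]
The hypothesis $\sigma \geq 1$ combined with the Lipschitz bound on $\sigma$ (which is what $\mathcal{C}(1,L)$ means here) gives $|\tilde\tau_i| \leq L/n$. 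Applying $(a+b)^2 \leq 2a^2 + 2b^2$, I would bound the leading piece by $2\sum_k u_k^2/\sigma(k/n)^2 \leq 2\|u\|^2$ (using $1/\sigma \leq 1$), and, via Cauchy--Schwarz together with a swap in the order of summation, the remainder by $(2L^2/n)\sum_i (i-1)u_i^2 \leq 2L^2\|u\|^2$. Altogether $z^T \Sigma^{-1} Q \Sigma^{-1} z \leq (2+2L^2)\,z^T Q z$, which is in fact stronger than the stated $(2+12L^2)$.

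The main obstacle will be resisting the temptation to work with $\Sigma Q \Sigma$ directly. A parallel Abel decomposition produces entries of the form $\sigma(k/n)\,u_k + R_k$ with $\|R\|$ of order $L\|u\|$, but any reverse-triangle bound of the form $(a+b)^2 \geq (1-\epsilon)a^2 - (1/\epsilon - 1) b^2$ yields a useful constant only for small $L$: since $\sigma$ may be arbitrarily large, lower-bounding $\sigma(k/n)^2 u_k^2$ by $u_k^2$ throws away precisely the growth needed to absorb $R_k$. The inversion sidesteps this because $1/\sigma \leq 1$ is uniform in the size of $\sigma$, so only the Hölder perturbation---which the hypothesis bounds by $L/n$ per step---ever needs to be controlled.
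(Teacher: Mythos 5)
Your proof is correct and takes a genuinely different route from the paper's. The paper works on the inverse side: it writes out the tridiagonal $Q^{-1}$, factors $Q^{-1}=OO^t$ with $O$ bidiagonal, expands $\Sigma^{-1}Q^{-1}\Sigma^{-1}$ via a commutator identity, bounds the resulting tridiagonal ``H\"older error'' matrix $\tilde\Sigma$ through a Gershgorin-type argument on its characteristic polynomial, and finally absorbs a leftover $L^2 n^{-2} I_n$ term into $Q^{-1}$ using the explicit eigenvalue estimate $\lambda_n(Q^{-1})\geq 1/(4n^2)$ from Lemma~\ref{lem.eigenvalues}. You instead stay on the $Q$ side, use the quadratic-form identity $z^TQz=\sum_k u_k^2$ with $u_k=\sum_{i\geq k}z_i$, apply the same identity to $\Sigma^{-1}z$, and control the resulting coefficients by Abel summation and a weighted Cauchy--Schwarz with the Lipschitz increments $\tilde\tau_i$, bypassing all matrix factorizations and eigenvalue estimates. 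Your argument is more elementary and self-contained (no appeal to a separate eigenvalue lemma), and it yields the sharper constant $2+2L^2$ in place of $2+12L^2$. The only caveat is cosmetic: the inequality $\Sigma^{-1}Q\Sigma^{-1}\leq (2+12L^2)Q$ is not really a ``reverse'' inequality but the same Loewner inequality after the congruence $z\mapsto\Sigma^{-1}z$; both you and the paper are proving equivalent reformulations of the same statement, just on opposite sides ($Q$ versus $Q^{-1}$) of the inversion.
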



\begin{proof}
Obviously
\begin{align}
	Q^{-1}_{i,j}
	=
	\begin{cases}
	2 \quad &\text{for} \quad  i=j \quad  \text{and} \quad  i<n,  \\
	-1 \quad &\text{for} \quad \left|i-j\right|=1,  \\
	1 \quad &\text{for} \quad i=j=n,  \\
	0 \quad &\text{else}.
	\end{cases}
	\label{eq.Qminus1}
\end{align}
Let $O\in \mathbb{M}_n$ given by
\begin{align*}
	O_{i,j}
	=
	\begin{cases}
	1 \quad &\text{for} \quad  i=j, \\
	-1 \quad &\text{for} \quad i=j-1, \\
	0 \quad &\text{else}.
	\end{cases}
\end{align*}
Note that $Q^{-1}=OO^t$. We have with
\begin{align*}
	\left(\tilde \Sigma\right)_{i,j}
	:=
	\begin{cases}
	\left(\Delta_i \sigma^{-1}\right)^2 \quad
	& \text{for} \quad i=j-1, \\
	\left(\Delta_j \sigma^{-1}\right)^2 \quad
	& \text{for} \quad i=j+1, \\
	0 \quad & \text{else}
	\end{cases}
\end{align*}
that
\begin{align}
	&\Sigma^{-1}Q^{-1} \Sigma^{-1}
	= \frac 12 \Sigma^{-2} Q^{-1}
	+ \frac 12 Q^{-1}\Sigma^{-2}
	+
	\frac 12 \tilde \Sigma \notag \\
	&\quad \leq
	Q^{-1} 
	+ \frac 12 \left(\Sigma^{-2}O-O\Sigma^{-2}\right)O^t
	+ \frac 12 O \left(O^t\Sigma^{-2}-\Sigma^{-2}O^t\right)
	+\frac 12 \tilde \Sigma.
	\label{eq.sigmaineq}
\end{align}
As can be seen by direct calculations
\begin{align}
 	&\left(\Sigma^{-2}O-O\Sigma^{-2}\right)O^t
	+ O \left(O^t\Sigma^{-2}-\Sigma^{-2}O^t\right) \notag \\
	& \quad = -
	\left(
	\begin{array}{cccc}
	2\Delta_1\sigma^{-2} & -\Delta_1\sigma^{-2} & & \\
	-\Delta_1\sigma^{-2} & \ddots & \ddots & \\
	& \ddots &	2\Delta_{n-1}\sigma^{-2} & -\Delta_{n-1}\sigma^{-2} \\
	& & -\Delta_{n-1}\sigma^{-2} & 0
	\end{array}
	\right).
	\label{eq.commub}
\end{align}
Define $M \in \mathbb{M}_n$ by
\begin{align*}
	M_{i,j}
	=
	\begin{cases}
	1 \quad &\text{for} \quad  i=j, \\
	\Delta_i \sigma^{-2}-1 \quad &\text{for} \quad i=j-1, \\
	0 \quad &\text{else}.
	\end{cases}
\end{align*}
Due to $MM^t\geq 0$ and (\ref{eq.commub}) it follows
\begin{align*}
 	&\left(\Sigma^{-2}O-O\Sigma^{-2}\right)O^t
	+ O \left(O^t\Sigma^{-2}-\Sigma^{-2}O^t\right) \\
	& \quad = Q^{-1}+
	\left(
	\begin{array}{cccc}
	\left(\Delta_1 \sigma^{-2}\right)^2 & & & \\
	& \ddots & & \\
	& & \left(\Delta_{n-1} \sigma^{-2}\right)^2 &  \\
	& & & 0
	\end{array}
	\right)
	\leq Q^{-1}+\frac{4L^2}{n^2}I_n,
\end{align*}
where we used in the last step that
\begin{align*}
	\left|\frac 1{\sigma^2\left(x\right)}-\frac 1{\sigma^2\left(y\right)}
	\right|
	= \left|\sigma\left(x\right)-\sigma\left(y\right)\right|
	\left|\frac{1}{\sigma^2\left(x\right)\sigma\left(y\right)}
	+\frac{1}{\sigma\left(x\right)\sigma^2\left(y\right)}\right|
	\leq 2L\left|x-y\right|,
\end{align*}
i.e. $\sigma\in \mathcal{C}\left(1,L\right)$ and $\sigma\geq 1$ implies $\sigma^{-2}\in \mathcal{C}\left(1,2L\right)$.

Next we bound the eigenvalues of the symmetric matrix $\tilde \Sigma$ by showing that the corresponding characteristic polynomial $\chi_{\tilde \Sigma}(t)$ does not have any zero in $[2\omega,\infty)$, where $\omega =\max_{i,j}\left|\left(\tilde \Sigma\right)_{i,j} \right|$. In order to see this introduce the notation $s(i):= \left(\tilde \Sigma\right)_{i,i+1}/\omega$ and note for $t\geq2\omega$
\begin{align*}
	&\left(
	\begin{array}{cccc}
	t\omega^{-1} & -s\left(1\right)& & \\
	-s\left(1\right) & \ddots & \ddots & \\
	& \ddots &	\ddots & -s\left(n-1\right) \\
	& & -s\left(n-1\right) & t\omega^{-1}
	\end{array}
	\right) \\
 	& \ \geq
	\left(
	\begin{array}{cccc}
	1+s\left(1\right)^2 & -s\left(1\right) & & \\
	-s\left(1\right) & \ddots & \ddots & \\
	& \ddots &	1+s\left(n-1\right)^2 & -s\left(n-1\right) \\
	& & -s\left(n-1\right)^2 & 1
	\end{array}
	\right) \\
	& \ =
	\left(
	\begin{array}{cccc}
	1& -s\left(1\right)& & \\
	& \ddots & \ddots & \\
	& & \ddots & -s\left(n-1\right) \\
	& & & 1
	\end{array}
	\right)
	\left(
	\begin{array}{cccc}
	1& -s\left(1\right)& & \\
	& \ddots & \ddots & \\
	& & \ddots & -s\left(n-1\right) \\
	& & & 1
	\end{array}
	\right)^t >0
\end{align*}
and therefore $\chi_{\tilde \Sigma}(t)>0$ for $t\geq 2\omega$. Because of $w\leq L^2/n^2$ this shows that 
\begin{eqnarray}
	\Sigma^{-1}Q^{-1} \Sigma^{-1}
	\leq
	3/2Q^{-1}+\frac{3L^2}{n^2} I_n.
	\label{eq.lslineq}
\end{eqnarray}
From Lemma \ref{lem.eigenvalues} follows $1/(4n^2)\leq \lambda_1^{-1}\left(Q\right)=\lambda_n\left(Q^{-1}\right)$ and hence
\begin{eqnarray*}
	\frac{L^2}{n^2}I_n
	\leq 4L^2\lambda_n\left(Q^{-1}\right) I_n
	\leq
	4L^2 Q^{-1}.
\end{eqnarray*}
This gives with (\ref{eq.lslineq}) finally
\begin{eqnarray*}
	\Sigma^{-1}Q^{-1} \Sigma^{-1}
	\leq
	2Q^{-1} +
	12L^2 Q^{-1}.
\end{eqnarray*}
\end{proof}

In the next lemma we collect some important facts about positive semidefinite and Hermitian matrices.
\begin{lem}
\label{lem.p}
\begin{itemize}
\item[(i)]
Let $A, B  \in \mathbb{M}_{n}$ are positive semidefinite matrices. Denote by $\lambda_1(A)$ the largest eigenvalue of $A$. Then $\Tr(AB)\leq \lambda_1(A)\Tr(B)$.
\item[(ii)] Let $A, B \in \mathbb{M}_{n}$ are Hermitian. Then 
\begin{eqnarray*}
	\lambda_{n-r-s}\left(A+B\right)
	&\geq& \lambda_{n-r}\left(A\right)+\lambda_{n-s}\left(B\right).
\end{eqnarray*}
\item[(iii)] Let $A, B$ are matrices of the same size. Then $$A^tB+B^tA\leq A^tA+B^tB.$$ 
\end{itemize}
\end{lem}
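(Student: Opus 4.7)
The lemma collects three standard matrix facts; I outline the approach for each.

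\textbf{Parts (i) and (iii).} For (i), the plan is to diagonalize the PSD matrix $A$. Write $A = U D U^t$ with $U$ orthogonal and $D = \operatorname{diag}(\lambda_1(A),\ldots,\lambda_n(A))$, so by cyclicity of the trace $\Tr(AB) = \Tr(D U^t B U)$. Setting $C := U^t B U$, the matrix $C$ is PSD (congruent to $B$), so each diagonal entry $C_{ii}$ is nonnegative, and hence
\begin{align*}
\Tr(DC) \;=\; \sum_{i=1}^n \lambda_i(A)\, C_{ii} \;\leq\; \lambda_1(A) \sum_{i=1}^n C_{ii} \;=\; \lambda_1(A) \Tr(B),
\end{align*}
where the last equality uses orthogonal invariance of the trace. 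For (iii) the plan is shorter: expand the obvious $(A-B)^t(A-B) \geq 0$ to obtain $A^t A - A^t B - B^t A + B^t B \geq 0$, which is precisely the claimed inequality in the Loewner order.

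\textbf{Part (ii).} This is a Weyl-type inequality. With the paper's convention that $\lambda_i$ denotes the $i$-th \emph{largest} eigenvalue, the Courant--Fischer formula reads $\lambda_k(M) = \max_{\dim V = k}\ \min_{x \in V,\ \|x\|=1} x^t M x$. The approach is to pick subspaces $V_A$, $V_B$ of dimensions $n-r$ and $n-s$ respectively that attain the outer maxima for $A$ and $B$, so that $\min_{x \in V_A,\ \|x\|=1} x^t A x = \lambda_{n-r}(A)$ and $\min_{x \in V_B,\ \|x\|=1} x^t B x = \lambda_{n-s}(B)$. The dimension count $\dim(V_A \cap V_B) \geq (n-r) + (n-s) - n = n-r-s$ yields a subspace $W \subseteq V_A \cap V_B$ of dimension exactly $n-r-s$, and for every unit vector $x \in W$,
\begin{align*}
x^t (A+B) x \;=\; x^t A x + x^t B x \;\geq\; \lambda_{n-r}(A) + \lambda_{n-s}(B).
\end{align*}
Using $W$ as a candidate subspace in the max-min formula for $\lambda_{n-r-s}(A+B)$ then delivers the claim.

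\textbf{Expected obstacle.} None of the three parts is deep, so I do not anticipate a genuine difficulty; parts (i) and (iii) are one-line computations and (ii) is the classical Weyl inequality. The only point to keep straight is the indexing convention in (ii): because $\lambda_i$ is the $i$-th largest eigenvalue, one must use the max-min (rather than min-max) form of Courant--Fischer, and the subspace dimensions $n-r$, $n-s$, and $n-r-s$ must be matched so that the intersection-dimension inequality lands exactly at $n-r-s$.
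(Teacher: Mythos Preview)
Your arguments for all three parts are correct and standard. The paper itself does not supply a proof of this lemma at all; it simply records these three well-known matrix facts for later use, so there is no ``paper's approach'' to compare against beyond noting that your proofs are exactly the textbook ones (diagonalization for (i), Weyl via Courant--Fischer for (ii), and expanding $(A-B)^t(A-B)\ge 0$ for (iii)).
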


\noindent
\begin{lem}[Frobenius norm]
\label{lem.froblem}
\begin{itemize} Let $A \in \mathbb{M}_{n}$. Then
\item[(i)]
\begin{eqnarray*}
	\left\|A\right\|_F^2:=\Tr\left(AA^t\right)
	=\sum_{i=1}^{n} \lambda_i\left(AA^t\right)
	=\sum_{i,j=1}^{n} a^2_{i,j}
\end{eqnarray*}
and whenever $A=A^t$ also $\left\|A\right\|_F^2=\sum_{i=1}^{n} \lambda_i^2\left(A\right)$.
\item[(ii)]
It holds
\begin{eqnarray*}
	4\Tr\left(A^2\right)\leq \left\|A+A^t\right\|_F^2\leq 4\left\|A\right\|_F^2.
\end{eqnarray*}
\item[(iii)]
Let $A$, $B$ be positive semidefinite matrices of the same size and $0\leq A\leq B$. Further let $X$ be another matrix of the same size. Then
\begin{eqnarray*}
	\left\|X^tAX\right\|_F
	\leq
	\left\|X^tBX\right\|_F.
\end{eqnarray*}
\end{itemize}
\end{lem}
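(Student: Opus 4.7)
The lemma splits into three independent parts, which I would dispatch in order.

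For part (i), this is pure bookkeeping. Writing out the diagonal of $AA^t$ entrywise gives $(AA^t)_{ii}=\sum_j a_{ij}^2$, so summing over $i$ yields $\Tr(AA^t)=\sum_{i,j}a_{ij}^2$. Since the trace of any square matrix equals the sum of its eigenvalues, $\Tr(AA^t)=\sum_i\lambda_i(AA^t)$. Finally, if $A=A^t$ then $AA^t=A^2$ and the eigenvalues of $A^2$ are precisely $\lambda_i(A)^2$, giving the last identity.

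For part (ii), I would expand directly. Since $(A+A^t)^t=A+A^t$, we have
\begin{align*}
\|A+A^t\|_F^2=\Tr\bigl((A+A^t)^2\bigr)=\Tr(A^2)+\Tr(AA^t)+\Tr(A^tA)+\Tr((A^t)^2).
\end{align*}
Using $\Tr(A^tA)=\Tr(AA^t)=\|A\|_F^2$ and $\Tr((A^t)^2)=\Tr(A^2)$ (trace is invariant under transposition), this simplifies to $2\Tr(A^2)+2\|A\|_F^2$. The two stated bounds then reduce to the single inequality $\Tr(A^2)\leq\|A\|_F^2$, i.e.\ $\sum_{i,j}a_{ij}a_{ji}\leq\sum_{i,j}a_{ij}^2$, which is immediate from Cauchy--Schwarz (or AM--GM on each pair $(i,j)$ and $(j,i)$).

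For part (iii), set $P:=X^tAX$ and $Q:=X^tBX$. Both are symmetric positive semidefinite (since $A$, $B$ are), and conjugating $0\leq A\leq B$ by $X$ preserves the ordering, so $0\leq P\leq Q$. Because $P$ and $Q$ are symmetric, $\|X^tAX\|_F^2=\Tr(P^2)$ and similarly for $Q$ by (i), so it suffices to show the monotonicity $\Tr(P^2)\leq\Tr(Q^2)$ on the positive semidefinite cone under the Loewner order. I would prove this via the two-step chain
\begin{align*}
\Tr(P^2)\leq\Tr(PQ)\leq\Tr(Q^2),
\end{align*}
where each inequality comes from $\Tr((Q-P)R)\geq 0$ for a suitable positive semidefinite test matrix $R$: taking $R=P$ gives $\Tr(P^{1/2}(Q-P)P^{1/2})\geq 0$, hence the first inequality, and taking $R=Q$ gives $\Tr(Q^{1/2}(Q-P)Q^{1/2})\geq 0$, hence the second.

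The only nonroutine step is (iii); the potential pitfall there is trying to deduce $\Tr(P^2)\leq\Tr(Q^2)$ in one shot from $P\leq Q$, which is not a valid manipulation without factoring through $\Tr(PQ)$ as above, since squaring is not monotone on Hermitian matrices in the Loewner order.

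\end{appendices}
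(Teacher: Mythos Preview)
Your argument is correct in all three parts. Parts (i) and (ii) the paper simply declares well known and omits; your write-up of them is fine.

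For part (iii) you and the paper take slightly different routes to the same conclusion. You pass through the chain $\Tr(P^2)\leq\Tr(PQ)\leq\Tr(Q^2)$, justifying each step by nonnegativity of the trace of a product of two positive semidefinite matrices. The paper instead argues eigenvalue-wise: from $0\leq P\leq Q$ Weyl's monotonicity gives $0\leq\lambda_i(P)\leq\lambda_i(Q)$ for every $i$, whence $\lambda_i(P)^2\leq\lambda_i(Q)^2$ as scalars, and summing yields $\|P\|_F^2\leq\|Q\|_F^2$. Your closing remark that one cannot go ``in one shot'' because squaring is not Loewner monotone is correct about the operator inequality $P^2\leq Q^2$, but the paper's eigenvalue argument avoids that obstruction entirely---it never claims $P^2\leq Q^2$, only that the ordered eigenvalues of $P$ are dominated termwise by those of $Q$, which is enough. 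Either approach is short and clean; the eigenvalue route is perhaps the more direct one-liner.
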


\begin{proof}
(i) and (ii) is well known and omitted. (iii) By assumption it holds $0\leq X^t A X \leq X^t B X$. Hence $\lambda_i^2\left(X^t A X\right)\leq \lambda_i^2 \left( X^t B X\right)$ and the result follows.
\end{proof}

\end{appendices}

%


\section*{Acknowledgments}
The first author is indepted to Larry D. Brown for his generous and warm hospitality during several visits to Philadelphia. Collaboration and discussion with Larry were always a great source of inspiration and had significant impact on his work.

\bibliographystyle{plain}       
\bibliography{refsPart1}

\end{document}